 \newtheorem{theorem}{Theorem}
\newtheorem*{theorem-non}{Theorem}
\newtheorem{lemma}[theorem]{Lemma}
 \newtheorem{proposition}[theorem]{Proposition}
 \newtheorem{corollary}[theorem]{Corollary}
\newproof{pf}{Proof}
\begin{document}

\begin{frontmatter}

\title{A balanced k-means algorithm for weighted point sets{\small\tnoteref{t1}}}

\author[TUM,UCD]{S. Borgwardt\corref{cor1}}
\ead{steffen.borgwardt@ucdenver.edu}

\author[UniBW]{A. Brieden}
\ead{andreas.brieden@unibw.de}

\author[TUM]{P. Gritzmann}
\ead{gritzmann@tum.de}

 \cortext[cor1]{Corresponding author.}

 \tnotetext[t1]{Research of the authors was supported in part by the EURO Excellence in Practice Award 2013.}

\address[TUM]{Fakult\"at f\"ur Mathematik, Technische~Universit\"at M\"{u}nchen, Garching-Forschungszentrum, Germany}

\address[UniBW]{Fakult\"at f\"ur Wirtschafts- und Organisationswissenschaften, Universit\"at der Bundeswehr M\"{u}nchen, Germany}

\address[UCD]{Department of Mathematical and Statistical Sciences,
  University of Colorado Denver, USA}


\begin{abstract}
The classical $k$-means algorithm for partitioning $n$ points 
in $\mathbb{R}^d$ into $k$ clusters is one of the most 
popular and widely spread clustering methods. The need to respect prescribed lower bounds on the cluster sizes has been observed in many scientific and 
business applications. 

In this paper, we present and analyze a generalization of $k$-means that is capable of handling weighted point sets and prescribed lower and upper bounds on the cluster sizes. We call it {\em weight-balanced $k$-means}. The key difference to existing models lies in the ability to handle the combination of weighted point sets with prescribed bounds on the cluster sizes. This imposes the need to perform partial membership clustering, and leads to significant differences.

For example, while finite termination of all $k$-means variants for unweighted point sets is a simple consequence of the existence of only finitely many partitions of a given set of points, the situation is more involved for weighted point sets, as there are infinitely many partial membership clusterings. Using  polyhedral theory, we show that the number of iterations of weight-balanced $k$-means is bounded above by  $n^{O(dk)}$, so in particular it is polynomial for fixed $k$ and $d$. This is similar to the known worst-case upper bound for classical $k$-means for unweighted point sets and unrestricted cluster sizes, despite the much more general framework. We conclude with the discussion of some additional favorable properties of our method.

\end{abstract}

\begin{keyword}
(I) linear programming \sep (I) data mining \sep clustering \sep k-means
%
\MSC[2012]  62H30 \sep 90C90 \sep 68Q32 \sep 90C05 \sep 52B12
\end{keyword}

\end{frontmatter}



\thispagestyle{plain}

\section{Introduction}

 The well-known {\bf $k$-means algorithm} (or Lloyd's
algorithm) \cite{m-67,l-06} is one of the most influential and popular clustering methods. Its use reaches into many fields of analyzing data for rational decision-making, with an abundance of applications in operations research ranging from facility location to risk prediction.

 Given a set of $n$ points and $k$
different initial sites in $\mathbb{R}^d$, 
it iteratively performs two steps (ignoring some technical details), see Algorithm \ref{algo:bklsa}: 
First, every point is assigned to a closest site. 
This partitions the points into $k$ clusters, one for each site. Second,
the sites are updated to be the arithmetic means of the
clusters. These two-step iterations are performed until the sites do
not change anymore. The algorithm exhibits many favorable properties,
and is well-accepted for its simplicity and fast convergence in
practice. Also, it is still subject to quite extensive theoretical
analysis motivated by the discrepancy between its excellent behaviour in 
practice and its known worst-case behaviour; see \cite{amr-11,vattani-11}.

The need to respect given lower bounds on the cluster sizes has been observed in many applications, particularly for guaranteeing that no empty clusters are created \cite{bbd-00,dbb-08,mf-14}. The desire to create clusters of prescribed sizes also arises when data segmentation is applied to arrive at more homogenous clusters for further statistical analysis. See \cite{bdw-09} for an introduction into general constrained clustering and more applications and \cite{hr-12,hrc-13} for some background on clustering with respect to prescribed cluster sizes. 

In the present paper, we present a generalization by introducing 
{\bf weight-balanced $k$-means}. Within this framework, we are able to
deal with weighted point sets and prescribed upper and
lower bounds for the cluster sizes. 

The combination of weighted point sets and the need to create clusters of prescribed cluster sizes also arise in many applications, particularly in data analytics, risk prediction, and predictive maintenance. Note that, in particular, weighted points allow for a natural 
representation of identical, repeated points in data sets. A recent application that already used a  preliminary version of the method in the present paper investigates the semantic structure behind SAR image collections \cite{bbrd-14}. In this approach, weights for a point set are assigned through user interaction. A preliminary version of the method in this paper was also applied in material science for the
representation of polycrystals \cite{abglp-15}.

 Another problem with both a weighted pointed set and the need to respect given cluster sizes arises in the consolidation of farmland \cite{bg-03,bbg-09,bbg-14}: In an
agricultural region, $k$ farmers cultivate $n$ lots. The goal is to
improve the cost-effective structure in this region by a combinatorial
redistribution of these lots, a {\em voluntary lend-lease agreement} 
after which each farmer can work on larger adjacent components.
The lots differ in value, and no farmer would accept a considerable 
deviation from his original total value during this redistribution. 
By representing the lots by points in the Euclidean plane and 
using their values as weights, we arrive at a weighted clustering problem 
with relatively tight bounds on the cluster value for each farmer.

In contrast to partitioning an unweighted point set, as facilitated
by $k$-means, a weighted point set imposes the need to perform partial
membership clustering, where points can be fractionally assigned to
more than one cluster. This is unavoidable in general if, e.g., a
data set consists of two points of weight three and has to be
clustered into three parts of weight two or, if a set of three points 
of equal weight has to be partitioned into two clusters of
equal size. As it turns out, the number of fractionally assigned weighted points can be strictly limited, see e.g. \cite{bbg-09}.

We set up the generalized $k$-means framework by modelling the computation of a so-called
{\em weight-balanced least-squares assignment} as a special linear program.
Its feasible region, the {\em weight-balanced partition polytope}, 
encodes specific cell decompositions of $\mathbb{R}^d$. Such {\em power diagrams}
are generalized Voronoi diagrams (see \cite{ak-99} for a survey), 
and appear in learning as the classifiers derived by alltogether models
for multiclass classification; see e.g., \cite{v-98,ww-98,bb-99,cs-01}.
Informally, a classifier is a rule for determining to which of the existing 
clusters a new point in $\mathbb{R}^d$ should be assigned. In our case, each 
cluster is associated with exactly one of the cells, and a new point is 
assigned to the clusters of (one, some, all) cells it lies in. 

We use these intimate connections to guarantee termination of our
algorithm within $n^{O(dk)}$ iterations. In order to place
this result into perspective note that by \cite{iki-94} there is 
a similar worst-case bound for classical $k$-means. Hence we obtain our 
generalization at essentially no additional cost. Further, the upper bound shows
that the algorithms runs in polynomial time for fixed $k$ and $d$. While classical $k$-means has a polynomial smoothed running time, 
\cite{amr-09,amr-11}, it may in general require exponentially many 
iterations even in the plane, \cite{vattani-11}.

In any case, results on the running time of $k$-means variants should be contrasted with the $\mathbb{NP}$-hardness of 
the {\em $k$-means problem}, the problem of finding globally optimal sites, for $k=2$, \cite{adhp-09}, or $d=2$, \cite{mnv-12}. There even is an $\epsilon >0$ for which it is $\mathbb{NP}$-hard to find a $(1+\epsilon)$-approximation for the problem \cite{acks-15,lsw-15}. There are, however, polynomial-time approximation algorithms even in the weight-balanced case with worst-case error bound that depends only on $d$ and $k$. \cite{bg-10} gives a more general analysis within the framework of norm-maximization. As far as we know, the complexity of computing any local optimum
of the $k$-means problem is still open.


The present paper is organized as follows. Section \ref{sec-preliminaries}
will provide the relevant terminology and concepts. In Section \ref{sec-results},
we outline our main results. Section \ref{sec-wbls-clustering} then prepares tools involving least-squares assignments and power diagrams.  In
Section \ref{sec-weight-balanced-k-means}, we present our generalized $k$-means algorithm, and
prove its correctness and termination. In particular, we derive the indicated bound
on the number of iterations of our algorithm. We conclude the paper in Section \ref{sec-final} with some remarks on additional favorable properties of our algorithm.

\section{Preliminaries}\label{sec-preliminaries}

We begin with some standard basic notation.

\subsection{Weight-balanced clusterings}
Let throughout the present paper $k,n,d \in \mathbb{N}$ with $n\geq
k\geq 2$. Let $X:=\{x_1,\dots,x_n\}\subset \mathbb{R}^d$ be a {\bf
  data set} of distinct points with associated {\bf weights}
$\Omega:=(\omega_1,\dots,\omega_n)\in \mathbb{R}^n$ and $\omega_i>0$
for all $i\leq n$. Without loss of generality, we assume that $X$ is
full-dimensional, i.e., the dimension of its affine hull is $d$; in
particular then $d+1\leq n$. Otherwise we could work in the affine
hull of $X$. Further, let $\kappa^-:=(\kappa_1^-,\dots,\kappa_k^-)^T,
\kappa^+:=(\kappa_1^+,\dots,\kappa_k^+)^T\in \mathbb{R}^k$ with
$0<\kappa_i^-\leq \kappa_i^+$ and 
$$
\sum\limits_{i=1}^k \kappa_i^-
\leq \sum\limits_{j=1}^n \omega_j \leq \sum\limits_{i=1}^k
\kappa_i^+\;\;.
$$

A {\bf (partial membership) $k$-clustering} $C:=(C_1,\dots,C_k)$ of $X$
consists of $k$ clusters $C_i$, and is defined by an assignment vector
$y:=(y_{11},\dots,y_{1n},\dots,y_{k1},\dots,y_{kn})^T\in [0,1]^{kn}$
of $X$ with $\sum\limits_{i=1}^k y_{ij}=1$ for all $j\leq n$. 
Informally, $y_{ij}$ is the fraction of the total weight
$\omega_j$ of point $x_j$ that belongs to $C_i$. Formally, we set
$C_i:= (y_{i1},\dots,y_{in})^T$. The {\bf support of the cluster $C_i$} is 
$$
\text{supp}(C_i):=\{x_j: y_{ij}>0\},
$$ 
the {\bf support of the clustering $C$} is the tuple
$\text{supp(C)}:=(\text{supp}(C_1),\dots,\text{supp}(C_k))$.

We use the notation $|C_i|:=\sum\limits_{j=1}^n y_{ij}\omega_j$ to
refer to the total weight or the {\bf size} of cluster $C_i$. The
tuple $|C|:=(|C_1|,\dots,|C_k|)^T$ is the {\bf shape} of $C$. The {\bf center
of gravity} $c_i$ of $C_i$ is given
by $$c_i:=\frac{1}{\sum\limits_{j=1}^n y_{ij} \omega_j}
\sum\limits_{j=1}^n y_{ij} \omega_j x_j\;\;.$$ In this paper we deal
with the task of finding {\bf weight-balanced clusterings}
that satisfy $\kappa^-\leq|C|\leq \kappa^+$ componentwisely.  

We will call a tuple $I=(k,n,d,X,\Omega,\kappa^\pm)$ that satisfies all the above properties an {\bf instance} for the weight-balanced clustering problems at hand. Note that for an instance $I$, there always exists a weight-balanced clustering. A tuple $I'=(k,n,d,X)$, without weights and without cluster size restrictions, is referred to as a {\bf trivial instance}.

\subsection{Weight-balanced least-squares assignments}\label{subsec-lc-assignments}

Least-squares assignments are a common kind of clustering, and appear
in many real-world applications like facility location or clustering
with low variance. They allow intuitive interpretations such as
measuring the cost to supply customers with geographic positions
$x_1,\dots,x_n$ from supply sites $s_1,\dots,s_k\in \mathbb{R}^d$ with
respect to a quadratic-loss transport. Several classical clustering
algorithms are based on least-squares assignments, a prime example
being the $k$-means algorithm, which computes such an assignment in
each iteration.

Let $S:=\{s_1,\dots,s_k\}$ be a set of {\bf sites } in
$\mathbb{R}^d$. A clustering $C:=(C_1,\dots,C_k)$ is a weighted
$S$-least-squares assignment of $X$ if and only if
\[
\sum\limits_{i=1}^k \sum\limits_{j=1}^n y_{ij}\omega_j\cdot
\|x_j-s_i\|^2
\]
is minimal for all clusterings of $X$ for the given sites $S$.  We call $C$ a 
{\bf weight-balanced $(S,\kappa^-,\kappa^+)$-least-squares assignment} if
it is weight-balanced and minimal with respect to all clusterings of $X$ of shape $
\kappa^-\leq|C|\leq \kappa^+$. We will add {\bf `strict'} to signify
that the minimal assignment is unique.

In the degenerate case with $s_i=s_j$ for some $i\neq j$, we do not
distinguish between clusters $C_i$ and $C_j$. Rather, we
treat them as a single cluster $C_{ij}$ defined by $C_{ij}:=
(y_{i1}+y_{j1},\dots,y_{in}+y_{jn})^T$. The cluster adheres to the
size bounds $\kappa_{ij}^{-}:=\kappa_{i}^-+\kappa_j^-$ and
$\kappa_{ij}^{+}:=\kappa_{i}^++\kappa_j^+$. The strict minimality of
the underlying clustering refers to a comparison with all clusterings
where $C_i$ and $C_j$ are treated as the combined cluster $C_{ij}$. In the following, we will therefore assume that all $s_i$ are different.

\subsection{Feasible power diagrams}\label{subsec-power-diagrams}

Power diagrams are special kinds of cell decompositions, and
generalize the well-known Voronoi diagrams. In machine learning, they arise as
classifiers derived by alltogether models for multiclass
classification. See \cite{a-87} for a survey of this data structure.

A power diagram is specified by a set of distinct sites
$S:=\{s_1,\dots,s_k\}\subset \mathbb{R}^d$ and parameters
$\Sigma:=(\sigma_1,\dots,\sigma_k)\in \mathbb{R}$. Using these
parameters, the {\bf $i$-th power cell} $P_i$ is defined by
$$
P_i:=\{x\in\mathbb{R}^d: \| x- s_i\|^2 - \sigma_i \leq  \| x- s_j\|^2 - \sigma_j 
\text{ for all } i \neq j\}.
$$
Here $\| \cdot \|$ denotes the Euclidean norm. The {\bf power diagram} $P$
then is the tuple $P:=(P_1,\dots,P_k)$.

For our weighted point sets, we say that a power diagram $P$ is 
{\bf feasible} for the clustering $C$ if $\text{supp}(C_i) \subset P_i$
for all $i \leq k$. Then {\bf $C$ allows $P$}. 
More strongly,  {\bf $P$ supports $C$} if $\text{supp}(C_i)=P_i\cap X$  for all $i\leq k$. 

In this paper, a special kind of feasible power diagrams, the
so-called {\bf strongly feasible power diagram $P$} is important,
\cite{bg-11}, that supports $C$ and has the following additional property.
Let $G(C)$ be the multigraph with vertices $C_1,\dots,C_k$ and an edge 
labeled with $x_j$ incident to $C_i$ and $C_l$ with $i\ne l$ if and only if 
$x_j\in \text{supp}(C_i) \cap \text{supp}(C_l)$. Then $G(C)$ does not 
contain a cycle with two or more different edge labels.

If $P$ is a feasible power diagram for $C$, and the sites coincide
with the centers of gravity of the clusters, i.e., $s_i=c_i$ for all
$i\leq k$, then $P$ is called a {\bf centroidal power diagram}.

\section{Main results}\label{sec-results}

We present a generalization of the $k$-means algorithm that can handle the combination of 
weighted point sets and prescribed lower and upper bounds on the
cluster sizes. More precisely, Algorithm \ref{algo:wbkm} accepts
weighted points, sites and lower and upper bounds for the cluster
sizes and computes a strongly feasible centroidal weight-balanced
least-squares assignment.

While the termination of the classical $k$-means algorithm is clear
(when appropriate precautions are taken if empty cells occur)
because none of the finitely many clusterings is visited twice,
finiteness of Algorithm \ref{algo:wbkm} is not obvious. In fact, since
we deal with partial membership clusterings, there is an infinite
number of possible states. However, our first result proves
termination.

\begin{theorem-non}
  Algorithm \ref{algo:wbkm} terminates with a clustering that allows a
  strongly feasible centroidal power diagram.
\end{theorem-non}

Inaba et al. \cite{iki-94} proved an upper bound of $n^{O(kd)}$ for
the worst-case running time of the classical $k$-means algorithm, by
bounding the total number of least-squares assignments of a data
set. See also \cite{hor-99,os-01} for other bounds on the number of
related types of clusterings. We derive a similar upper bound in our more general framework.  More percisely, with $e$ denoting the
Euler number we obtain the following bound on the number of iterations
of our algorithm which is polynomial for fixed dimension and fixed number of
clusters.

\begin{theorem-non}
  The number of iterations of Algorithm \ref{algo:wbkm} is bounded by
  $(40ek^2n)^{(d+1)k-1}.$
\end{theorem-non}

The proofs of these theorems are given in Section \ref{subsec-termination} (as
Theorems \ref{thm:termination} and \ref{thm:numberofiterations}).

\section{Weight-balanced least-squares assignments}\label{sec-wbls-clustering}

In this section, we recall how to compute a weight-balanced
least-squares assignment for a given instance $I=(k,n,d,X,\Omega)$ and a set of sites $S$. This computation has been studied in \cite{bg-11}. Proposition \ref{prop:LSAPD} will list the results we need for our discussion. To make the paper self-contained, we give the detailed model in our notation.

\subsection{A linear programming formulation}\label{subsec-lp-formulation}
We are interested in some weight-balanced $(S, \kappa^-, \kappa^+)$-least-squares assignment of $X$. We can
describe the task to find such an assignment as an optimization
problem in the form
$$ 
\min\, \sum\limits_{i=1}^k \sum\limits_{j=1}^n y_{ij}\omega_j \cdot \| x_j-s_i \|^2 
\quad\text{ s.t. }\quad \kappa^- \leq|C_i|\leq \kappa^+ \quad (i\leq k)\;\;.  
$$
We use the variables $y_{ij}\in [0,1]$ to indicate how much of the weight
$\omega_j$ of point $x_j$ is associated to cluster $C_i$. Then the
constraints can be described by the
set
$$
\begin{array}{lcrclcl}
  \kappa_i^- & \leq   & \sum\limits_{j=1}^{n}  y_{ij}\omega_j    & \leq & \kappa_i^+ & \quad & (i\leq k)\hfill \;\\
  &      & \sum\limits_{i=1}^k y_{ij} & =  &  1
  & \quad &  (j\leq n)\hfill \;\\
  &      &          y_{ij}     & \geq   & 0                   &  & (i \leq k,
  j \leq n)\;\;
\end{array}
$$
of linear equalities. The first line in this system implies that the
clusters satisfy the given bounds, whereas the second line guarantees
that each point is fully assigned. Note that these constraints define
a polytope; we call it the {\bf weight-balanced partition polytope}.

The objective function can be written as
$$
\min\, \sum\limits_{i=1}^k \sum\limits_{j=1}^n  y_{ij}\omega_j \cdot\| x_j-s_i \|^2
=\min\, \sum\limits_{i=1}^k \sum\limits_{j=1}^{n} y_{ij}\omega_j \cdot  (x_j^Tx_j-2x_j^Ts_i+s_i^Ts_i)\;\;,
$$
which is linear in the $y_{ij}$, as $x_j, s_i$ and $\omega_j$ are
fixed. Note further that $$\sum\limits_{i=1}^k \sum\limits_{j=1}^{n}
y_{ij}\omega_j\cdot (x_j^Tx_j-2x_j^Ts_i+s_i^Ts_i) =
\sum\limits_{j=1}^{n} \omega_j \cdot x_j^Tx_j + \sum\limits_{i=1}^k
\sum\limits_{j=1}^{n} y_{ij}\omega_j\cdot (s_i^Ts_i-2x_j^Ts_i)\;\; .$$
Thus, the objective function can be simplified to
$$
\min\, \sum\limits_{i=1}^k \sum\limits_{j=1}^{n}  y_{ij}\omega_j\cdot  (s_i^Ts_i-2x_j^Ts_i)\;\;.
$$
We formally sum up this construction in Algorithm \ref{algo:blsa} and the subsequent lemma. 

\begin{algorithm}
  \begin{itemize}
  \item {\bf Input}: Instance $I=(k,n,d,X,\Omega,\kappa^\pm)$, $S:=\{s_1,\dots,s_k\}\subset\mathbb{R}^d$
  \item {\bf Output:} A $(S,\kappa^-,\kappa^+)$-least-squares
    assignment of $X$
  \end{itemize}
  Solve the linear program

$$
\begin{array}{lcrclcl}
  \multicolumn{5}{c}{\min\, \sum\limits_{i=1}^k \sum\limits_{j=1}^{n} y_{ij}\omega_j\cdot   (s_i^Ts_i-2x_j^Ts_i)}  &       &\; \\
  \kappa_i^- & \leq   & \sum\limits_{j=1}^{n}  y_{ij}\omega_j    & \leq & \kappa_i^+ & \quad & (i\leq k)\hfill \;\\
  &      & \sum\limits_{i=1}^k y_{ij} & =  &  1
  & \quad & (j\leq n)\hfill \;\\
  &      &          y_{ij}
  & \geq   & 0                   &  & \hfill (i \leq k,
  j \leq n)\;\;.
\end{array}
$$
and return a basic feasible solution as the assignment.
\caption{Weight-balanced least-squares assignment}
\label{algo:blsa}
\end{algorithm}

\begin{lemma}\label{lem:blsa}
  Algorithm \ref{algo:blsa} computes a weight-balanced least-squares
  assignment by linear programming with $k\cdot n$ variables and
  $(k+1)\cdot n +2\cdot k$ constraints.
\end{lemma}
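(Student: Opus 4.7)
The lemma has three things to verify: correctness of the LP formulation, the variable count, and the constraint count. None of these is deep; the plan is essentially to check that the LP in Algorithm~\ref{algo:blsa} is equivalent to the defining optimization problem for a weight-balanced $(S,\kappa^-,\kappa^+)$-least-squares assignment, and then to tally up variables and constraints.

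First I would argue equivalence of the objective. Using the expansion $\|x_j-s_i\|^2 = x_j^Tx_j - 2x_j^Ts_i + s_i^Ts_i$, and the fact that any feasible $y$ satisfies $\sum_{i=1}^k y_{ij}=1$, the original objective satisfies
\[
\sum_{i=1}^k\sum_{j=1}^n y_{ij}\omega_j\|x_j-s_i\|^2 \;=\; \sum_{j=1}^n \omega_j x_j^Tx_j \;+\; \sum_{i=1}^k\sum_{j=1}^n y_{ij}\omega_j(s_i^Ts_i-2x_j^Ts_i).
\]
Since the first sum does not depend on $y$, minimizing the original objective over the feasible set is the same as minimizing the LP objective in Algorithm~\ref{algo:blsa}; this is precisely the computation laid out in Section~\ref{subsec-lp-formulation}.

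Next I would check that the LP feasible set is exactly the set of assignment vectors of weight-balanced clusterings with $\kappa^-\leq|C|\leq\kappa^+$. The equalities $\sum_{i=1}^k y_{ij}=1$ together with $y_{ij}\geq 0$ force $y_{ij}\in[0,1]$, so every feasible $y$ is a valid partial-membership assignment vector; the bound constraints then translate directly into $\kappa_i^-\leq|C_i|\leq\kappa_i^+$. Feasibility is nonempty by the assumption $\sum_i\kappa_i^-\leq\sum_j\omega_j\leq\sum_i\kappa_i^+$ (one can, e.g., fractionally split each $x_j$ across clusters in proportion to prescribed sizes), and boundedness of the polytope follows from $y_{ij}\in[0,1]$. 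Hence an optimal basic feasible solution exists and by the objective equivalence realizes a weight-balanced $(S,\kappa^-,\kappa^+)$-least-squares assignment.

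Finally I would count. Variables: $y_{ij}$ for $i\leq k$, $j\leq n$, giving $kn$. Constraints: two per cluster for the size bounds (i.e., $2k$), one equality per point for $\sum_i y_{ij}=1$ (i.e., $n$), and one nonnegativity constraint per variable (i.e., $kn$). Adding these gives $kn + n + 2k = (k+1)n + 2k$. I do not expect any serious obstacle here, since the content of the lemma is really just a bookkeeping confirmation that the construction in Section~\ref{subsec-lp-formulation} yields the advertised LP of the advertised size.
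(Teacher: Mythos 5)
Your proposal is correct and follows essentially the same route as the paper: the lemma is stated as a summary of the construction in Section~\ref{subsec-lp-formulation}, whose content is exactly your objective-function simplification (dropping the constant $\sum_j \omega_j x_j^Tx_j$) together with the straightforward tally of $kn$ variables and $2k+n+kn$ constraints. The extra remarks on nonemptiness and boundedness of the feasible region are fine (and correct) but not part of the paper's argument.
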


\subsection{Strongly feasible power diagrams}\label{subsec-strongly-feasible-pd}
When varying the sites, the different outputs of Algorithm \ref{algo:blsa} 
constitute a special subclass $\mathcal{V}$ of the vertices of the
weight-balanced partition polytope. For our purposes we need a characterization of
$\mathcal{V}$ in terms of strongly feasible power diagrams.

Feasible power diagrams are intimately related to least-squares
assignments of unweighted point sets. Aurenhammer et al. \cite{aha-98}
proved that if $C$ is a balanced least-squares assignment of an
unweighted data set (i.e., $\Omega=(1,\dots,1)$) and
$\kappa^-=\kappa^+$, then $C$ allows a strongly feasible power
diagram. It is also possible to obtain this statement by a careful
interpretation of Theorem $5$ in \cite{bhr-92}. These results allow for a
far-reaching extension, \cite{bg-11}, leading, in particular, to a complete
characterization in the case $\kappa^-=\kappa^+$. Here we only need
the following particular result (see \cite{bg-11}, Corollaries 2.2, 2.3 
(including their proofs), and Lemmata 4.2, 4.3).

 \begin{proposition}[\cite{bg-11}]\label{prop:LSAPD}
   Let $X$ be a weighted data set, and let $C$ be a (strict)
   weight-balanced least-squares assignment for $X$. Then $C$ allows a
   (strongly) feasible power diagram. 

Further, if $C$ allows a strongly feasible power diagram, its assignment vector $y$ contains
   at most $2(k-1)$ fractional components, i.e., components with
   $0<y_{ij}<1$.
 \end{proposition}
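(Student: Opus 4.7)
My plan is to prove the two statements separately: the power diagram in the first part comes from the dual of the LP solved in Algorithm \ref{algo:blsa}, and the bound in the second part is a short forest count.

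For the first statement, I would introduce dual multipliers $\mu_i^-,\mu_i^+\ge 0$ for the lower and upper cluster-size bounds and $\lambda_j\in\mathbb{R}$ for the partition equalities $\sum_i y_{ij}=1$, and set $\sigma_i:=\mu_i^--\mu_i^+$. The dual constraint attached to the variable $y_{ij}$ is $\omega_j\sigma_i-\lambda_j\le\omega_j(s_i^Ts_i-2x_j^Ts_i)$, with complementary slackness forcing equality whenever $y_{ij}>0$. Dividing by $\omega_j$ and adding $x_j^Tx_j$ to both sides, these inequalities rewrite as $x_j^Tx_j-\lambda_j/\omega_j\le\|x_j-s_l\|^2-\sigma_l$ for every $l$, with equality for each $i$ satisfying $y_{ij}>0$. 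Combining, $\|x_j-s_i\|^2-\sigma_i\le\|x_j-s_l\|^2-\sigma_l$ whenever $y_{ij}>0$, which is exactly the statement $x_j\in P_i$ in the power diagram with parameters $\Sigma:=(\sigma_1,\dots,\sigma_k)$. Hence $\mathrm{supp}(C_i)\subset P_i$ and $P$ is feasible for $C$. The strengthening from \emph{strict} optimality to a \emph{strongly} feasible diagram is more delicate, because one must in addition exclude every cycle in $G(C)$ carrying two distinct edge labels; here I would invoke uniqueness of the optimal assignment together with the structural characterisation of \cite{bg-11}.

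For the second statement I would introduce the bipartite \emph{split-incidence graph} $H(C)$: its left vertices are the split points (those $x_j$ for which some $y_{ij}\in(0,1)$), its right vertices are the clusters $C_1,\dots,C_k$, and its edges are the pairs $(x_j,C_i)$ with $y_{ij}\in(0,1)$. By construction the number of edges of $H(C)$ equals the number of fractional components of $y$, and every left vertex has degree at least $2$. Any cycle in the simple bipartite graph $H(C)$ visits each split point at most once, so projecting it onto the cluster side produces a cycle in $G(C)$ whose edges carry pairwise distinct labels, which is forbidden by strong feasibility. Thus $H(C)$ is a forest. Writing $t$ for the number of split points and $N_f$ for the number of fractional components, the degree bound gives $N_f\ge 2t$, while the forest inequality on at most $t+k$ vertices gives $N_f\le t+k-1$. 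Combining, $t\le k-1$ and therefore $N_f\le 2(k-1)$.

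The genuinely non-routine step is the strict-to-strongly-feasible implication in the first part: translating uniqueness of the optimal LP vertex into the acyclicity property of $G(C)$ that defines strong feasibility. Everything else reduces to standard LP duality or the short forest count given above.
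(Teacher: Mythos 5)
A point of reference first: the paper does not prove this proposition at all --- it is imported wholesale from \cite{bg-11}, with pointers to Corollaries~2.2, 2.3 and Lemmata~4.2, 4.3 there. So any self-contained argument is necessarily ``different from the paper's.'' Your two sub-arguments are of unequal status. The duality computation for the first claim is correct and is essentially the mechanism of \cite{bg-11}: complementary slackness forces $\|x_j-s_i\|^2-\sigma_i=\lambda_j/\omega_j+x_j^Tx_j\le\|x_j-s_l\|^2-\sigma_l$ whenever $y_{ij}>0$, hence $\text{supp}(C_i)\subset P_i$. Your forest count for the second claim is also correct and complete: a cycle in the bipartite split-incidence graph does project to a cycle in $G(C)$ with pairwise distinct labels; the degree bound $N_f\ge 2t$ and the forest bound $N_f\le t+k-1$ combine to $t\le k-1$ and $N_f\le 2(k-1)$. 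This is a clean derivation of the fractionality bound directly from the paper's definition of strong feasibility, arguably more transparent than chasing the cited lemmata.

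The genuine gap is exactly where you flag it: strict optimality $\Rightarrow$ strongly feasible. Deferring to ``the structural characterisation of \cite{bg-11}'' is circular, since that characterisation is precisely the content of the proposition being imported; as written, your proof of the first claim covers only the non-parenthesised version. The missing step can be closed with tools you already have. Suppose $G(C)$ contains a cycle $C_{i_1},x_{j_1},C_{i_2},\dots,C_{i_m},x_{j_m},C_{i_1}$ carrying at least two distinct labels. Shift weight $\varepsilon>0$ around it: replace $y_{i_tj_t}$ by $y_{i_tj_t}-\varepsilon/\omega_{j_t}$ and $y_{i_{t+1}j_t}$ by $y_{i_{t+1}j_t}+\varepsilon/\omega_{j_t}$ for all $t$. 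Every point total $\sum_i y_{ij}$ and every cluster size $\sum_j\omega_jy_{ij}$ is preserved, so the perturbed vector is feasible for small $\varepsilon$; since all modified entries have $y_{ij}>0$, the corresponding dual constraints are tight, and the change in objective telescopes to $\varepsilon\sum_t(\sigma_{i_{t+1}}-\sigma_{i_t})=0$. With two or more distinct labels the perturbed vector genuinely differs from $y$ (a single-label cycle produces no net change, which is why such cycles are permitted), contradicting strictness. One also has to upgrade ``feasible'' to ``supports $C$,'' i.e.\ $\text{supp}(C_i)=P_i\cap X$, which requires a further exchange argument for points lying on a cell boundary but assigned wholly elsewhere; this, too, is buried in the citation rather than in your text.
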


 In particular, if $C$ corresponds to a vertex in $\mathcal{V}$ then, by Proposition \ref{prop:LSAPD},
 $C$ allows a strongly feasible power diagram.

\begin{corollary}\label{cor:WeComputeSPD}
  Algorithm \ref{algo:blsa} computes a clustering $C$ that allows a
  strongly feasible power diagram.
\end{corollary}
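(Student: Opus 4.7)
The corollary is essentially a restatement of the sentence immediately preceding it, so the plan is to (i) identify the output of Algorithm \ref{algo:blsa} as a vertex in the class $\mathcal{V}$ and (ii) invoke Proposition \ref{prop:LSAPD}. First, by Lemma \ref{lem:blsa}, the algorithm's output $C$ is obtained as a basic feasible solution of the LP displayed in Section \ref{subsec-lp-formulation}. By standard LP theory a basic feasible solution is a vertex of the feasible polyhedron, i.e., of the weight-balanced partition polytope, and it attains the LP minimum for the input sites $S$. Hence, by the definition of $\mathcal{V}$ at the start of Section \ref{subsec-strongly-feasible-pd} as the set of all such optimal vertices obtained as the sites vary, the returned $C$ belongs to $\mathcal{V}$.

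The second step would be to apply Proposition \ref{prop:LSAPD}. Since $C$ is a weight-balanced $(S,\kappa^-,\kappa^+)$-least-squares assignment, the non-parenthetical part of the proposition already gives a feasible power diagram. To upgrade this to a \emph{strongly} feasible power diagram, I would use the parenthetical ``strict'' clause. The cleanest route is a small perturbation: replace the sites $s_i$ by slightly perturbed sites $\tilde s_i$, chosen so that the perturbed LP has $C$ as its \emph{unique} optimal vertex. Then $C$ is a strict weight-balanced least-squares assignment for $(\tilde s_1,\dots,\tilde s_k)$, and the strict form of Proposition \ref{prop:LSAPD} produces a strongly feasible power diagram for $C$. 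Because the property ``$C$ allows a strongly feasible power diagram'' refers only to $C$ together with some auxiliary sites and parameters $\Sigma$, and not to the original sites $S$, this conclusion transfers back to the situation of the corollary without further work.

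The main obstacle is the perturbation/degeneracy step. When several vertices of the weight-balanced partition polytope are simultaneously LP-optimal for the unperturbed sites $S$, one has to argue that an arbitrarily small perturbation of $S$ can be chosen to single out the particular vertex returned by the algorithm as the unique minimizer. This is a standard piece of linear-programming folklore (for example by a lex-min argument or an $\varepsilon$-perturbation of the objective coefficients $s_i^Ts_i - 2x_j^Ts_i$), but it is the only content of the proof beyond the direct appeal to the preceding discussion. Once this is done, the combination ``basic feasible $\Rightarrow$ vertex in $\mathcal{V}$'' together with the strict case of Proposition \ref{prop:LSAPD} closes the argument.
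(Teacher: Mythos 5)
Your first step (basic feasible solution $\Rightarrow$ vertex of the weight-balanced partition polytope $\Rightarrow$ member of $\mathcal{V}$) matches the paper, which simply combines Lemma \ref{lem:blsa} with Proposition \ref{prop:LSAPD} via the remark preceding the corollary. The divergence, and the problem, is in how you obtain the word \emph{strongly}. The paper gets it from the fact that the returned assignment is a \emph{vertex}: the acyclicity condition on $G(C)$ in the definition of a strongly feasible power diagram is exactly what basicness buys (a cycle in $G(C)$ with two or more edge labels yields a linear dependence among the columns of the fractional variables, contradicting that $y$ is a basic solution), and this is the content of the statement imported from \cite{bg-11} ``including their proofs'' that is invoked in the sentence before the corollary. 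You instead try to get ``strongly'' from ``strict'' by perturbing the sites so that $C$ becomes the unique optimizer, and this step has a genuine gap.

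The LP folklore you appeal to concerns perturbing the objective vector $c\in\mathbb{R}^{kn}$ \emph{freely}; here the admissible objectives are constrained to the $dk$-parameter family $c_{ij}=\omega_j(s_i^Ts_i-2x_j^Ts_i)$, and a free $\varepsilon$-perturbation of these coefficients is in general not realizable by any choice of sites (and if you perturb the coefficients outside this family, the resulting strict optimizer is no longer a least-squares assignment, so Proposition \ref{prop:LSAPD} no longer applies). Worse, a direct computation shows the obstruction is real: for $\tilde s_i=s_i+\varepsilon u_i$ one has
$$
\Theta(C,\tilde S)-\Theta(C,S)
=2\varepsilon\sum_{i=1}^k |C_i|\,u_i^T\bigl(s_i-c_i\bigr)
+\varepsilon^2\sum_{i=1}^k |C_i|\,\|u_i\|^2,
$$
which depends on the assignment only through the shape $|C|$ and the centers of gravity $c_i$. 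Hence two optimal vertices with the same shape and centroids remain exactly tied for \emph{every} perturbation of the sites, and no perturbation can single out the one the algorithm returned. To close your argument you would have to rule out such ties, or restrict to perturbation directions under which the returned vertex stays optimal and becomes unique --- neither of which is addressed. The repair is to drop the perturbation entirely and use the vertex property directly, as the paper does.
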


  By Lemma \ref{lem:blsa}, Algorithm \ref{algo:blsa} computes a
  weight-balanced least-squares assignment $y$ and returns a vertex of the
  weight-balanced partition polytope. By Proposition \ref{prop:LSAPD},
  it corresponds to a clustering $C$ that allows a strongly feasible
  power diagram. \hfill $\Box$

\section{Weight-balanced $k$-means}\label{sec-weight-balanced-k-means}

The classical $k$-means algorithm is one of the most widely used
clustering algorithms. As a service to the reader, we state a basic
version as Algorithm \ref{algo:bklsa}.

\begin{algorithm}[h]
  \begin{itemize}
  \item  {\bf Input}: Trivial instance $I'=(k,n,d,X)$, $S:=\{s_1,\dots,s_k\}\subset\mathbb{R}^d$
 \item  {\bf Output}: A least-squares assignment of $X$ for the
  arithmetic means as sites
\end{itemize}
  \begin{enumerate}
  \item Partition $X$ into a clustering $C=(C_1,\dots,C_k)$ by
    assigning $x_j \in X$ to the cluster $C_i$ of a closest site
    $s_i\in S$.
  \item Update each site $s_i$ as the center of gravity of cluster
    $C_i$; if $|C_i|=0$, choose $s_i=x_l$ for a random $l\leq n$ with
    $x_l\neq s_j$ for all $j\leq k$. If the sites change, go to
    $(1.)$; else
    return the current assignment and sites.
  \end{enumerate}
  \caption{$k$-means}\label{algo:bklsa}
\end{algorithm}

\subsection{Balanced $k$-means for weighted point sets}\label{subsec-balanced}
By replacing the trivial assignment step $(1.)$ with the computation of
a strongly feasible weight-balanced clustering according to Algorithm
\ref{algo:blsa}, we can generalize this framework to deal with
weighted point sets and lower and upper bounds on the cluster
sizes. Algorithm \ref{algo:wbkm} describes this in pseudocode.

\begin{algorithm}
  \begin{itemize}
  \item {\bf Input}: Instance $I=(k,n,d,X,\Omega,\kappa^\pm)$, 
    $S:=\{s_1,\dots,s_k\}\subset\mathbb{R}^d$
  \item {\bf Output}: A strict weight-balanced
    least-squares assignment of $X$ for its centers of gravity as
    sites
  \end{itemize}
  \begin{enumerate}
  \item Apply Algorithm \ref{algo:blsa} for the current set of sites
    to obtain assignment $y$.
  \item Update each site $s_i$ as the center of gravity
    $c_i:=\frac{1}{\sum\limits_{j=1}^n y_{ij}\omega_j}
    \sum\limits_{j=1}^n y_{ij}\omega_j x_j$. 

If the objective function value decreased during the last iteration, go to $(1.)$; else
    return the current assignment and sites.
  \end{enumerate}
  \caption{weight-balanced $k$-means}
  \label{algo:wbkm}
\end{algorithm}

\subsection{Correctness and termination}\label{subsec-termination}
It is easy to show correctness and termination of the standard
$k$-means algorithm. The trivial assignment of each point to a closest
site in each iteration is readily interpreted as the computation of an
unconstrained least-squares assignment. A straightforward
argument then shows that the center of gravity of a fixed (non-empty)
cluster is the site for which this least-squares distance is minimal:
$$
c_i= \frac{1}{|C_i|} \sum\limits_{x \in C_i} x 
= \arg \min\limits_{c\in \mathbb{R}^d} \sum\limits_{x\in C_i} \|x-c\|^2.
$$
The sum of these sums of squared distances forms a strictly
decreasing sequence over the course of the algorithm. As there is only
a finite number of different clusterings of the finite point set $X$,
the algorithm terminates.

For our scenario with weighted points and partial
membership clustering, a proof of correctness is slightly more involved: There
is an infinite number of such clusterings, so that arguing with a
decreasing sequence of objective function values does not
suffice. The connection to strongly feasible power diagrams, however,
provides us with additional tools to prove termination.

\begin{theorem}\label{thm:termination}
  Algorithm \ref{algo:wbkm} terminates with a clustering that allows a
  strongly feasible centroidal power diagram.
\end{theorem}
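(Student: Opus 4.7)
\emph{Plan.} The main obstacle is that the weight-balanced partition polytope contains a continuum of feasible clusterings, so the classical ``no clustering is visited twice'' argument used for $k$-means does not apply directly. I would circumvent this by exploiting that the iterates produced by Algorithm~\ref{algo:blsa} are always \emph{vertices} of that polytope, of which there are only finitely many, and couple this with a monotonicity argument for the objective
$$
f(y,s):=\sum_{i=1}^{k}\sum_{j=1}^{n} y_{ij}\,\omega_j\,\|x_j-s_i\|^{2}.
$$

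I would first establish monotonicity within a single iteration. Let $(y^{(t)},s^{(t)})$ denote the state at the end of iteration~$t$. Step~(1.) produces $y^{(t)}$ as an LP-optimal assignment for the sites $s^{(t-1)}$, so $f(y^{(t)},s^{(t-1)})\le f(y^{(t-1)},s^{(t-1)})$. Step~(2.) then updates each site to the center of gravity of its cluster under $y^{(t)}$; since the constraint $\kappa_i^->0$ forces every cluster to have positive weight, the centroid is the unique minimizer of $\sum_{j} y_{ij}\omega_j\|x_j-s_i\|^{2}$ in $s_i$, giving $f(y^{(t)},s^{(t)})\le f(y^{(t)},s^{(t-1)})$ with equality if and only if $s^{(t)}=s^{(t-1)}$. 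Hence the sequence of objective values is non-increasing, and strictly decreasing whenever the termination criterion does not fire.

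Next I would invoke finiteness. By Lemma~\ref{lem:blsa}, every $y^{(t)}$ is a basic feasible solution, hence a vertex of the weight-balanced partition polytope, and $s^{(t)}$ is determined by $y^{(t)}$ through the centroid map. Since the polytope has only finitely many vertices, the pairs $(y^{(t)},s^{(t)})$ lie in a finite set. If the algorithm ran forever, the strict decrease of $f$ would force all these pairs to be pairwise distinct, a contradiction. So termination is reached after finitely many iterations.

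Finally I would read off the claimed properties. When the algorithm terminates, the objective did not decrease in the final iteration, so both inequalities above are tight; in particular $s^{(t)}=s^{(t-1)}$, i.e., the input sites of the final call to Algorithm~\ref{algo:blsa} already coincide with the centroids of the returned clustering $C$. By Corollary~\ref{cor:WeComputeSPD}, $C$ allows a strongly feasible power diagram with those sites, and the coincidence with the centroids makes that diagram centroidal. The delicate part is the reduction from the continuum of feasible clusterings to the finitely many vertices of the partition polytope; once that is in place, the strict-decrease argument and the centroidal conclusion follow directly.
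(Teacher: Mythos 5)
Your proposal is correct and follows essentially the same route as the paper's proof: establish that the centroid uniquely minimizes the weighted least-squares cost for a fixed assignment, deduce that the objective is monotonically (and, until termination, strictly) decreasing across the two half-steps, conclude termination from the finiteness of the vertex set of the weight-balanced partition polytope, and obtain strong feasibility from Corollary~\ref{cor:WeComputeSPD} together with the fixed-point property of the sites at termination. Your explicit observation that tightness of both inequalities forces $s^{(t)}=s^{(t-1)}$, so that the sites of the final power diagram are the centroids, is a slightly more careful rendering of the paper's remark that the final site update yields a centroidal diagram.
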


\begin{pf}
  First, we prove that the center of gravity $c_i$ of a cluster is the unique
  optimal site with respect to a fixed weight-balanced least-squares
  assignment. Hence, if it terminates, the algorithm produces a
  clustering with a feasible centroidal power diagram.

  So, let $C:=(C_1,\dots,C_k)$ be a fixed clustering with centers of
  gravity $c_1,\dots,c_k$, and let $s_1,\dots,s_k$ be optimal
  sites. We set $z_i:=c_i-s_i$ for $i\leq k$. For each $C_i$, we then
  have
  \begin{eqnarray*}
   \sum\limits_{j=1}^n  y_{ij}\omega_j \|x_j-s_i\|^2  & = & \sum\limits_{j=1}^n y_{ij}\omega_j x_j^Tx_j - 2 \sum\limits_{j=1}^n y_{ij}\omega_j x_j^T(c_i-z_i) + \sum\limits_{j=1}^n y_{ij}\omega_j (c_i-z_i)^T(c_i-z_i)\\
    & = &\sum\limits_{j=1}^n y_{ij}\omega_j x_j^Tx_j -2|C_i|c_i^Tc_i + 2|C_i|c_i^Tz_i  + |C_i|c_i^Tc_i - 2|C_i|c_i^Tz_i  + |C_i|z_i^Tz_i\\
    & = & \sum\limits_{j=1}^n y_{ij}\omega_j x_j^Tx_j - |C_i|c_i^Tc_i + |C_i|z_i^Tz_i\;\;.
  \end{eqnarray*}
  Since the clustering is fixed, and since $z_i^Tz_i \geq 0$, this sum
  is minimal only for $z_i=0$, i.e., for $s_i=c_i$. Let 
$$
\Theta(C,S):=\sum\limits_{i=1}^k \sum\limits_{j=1}^{n}
  y_{ij}\omega_j\cdot (s_i^Ts_i-2x_j^Ts_i)
$$ 
be the least-squares value for the clustering $C$ and sites $S$, 
let $C^{(l)}$ be the returned optimal   clustering for the $l$-th iteration, 
and let $S^{(l)}$ be the set of sites used for the linear program to produce $C^{(l)}$. Note that
  $S^{(l+1)}$ consists of the centers of gravity of the clusters of
  $C^{(l)}$.

We have
$$
\Theta(C^{(l)},S^{(l)})\geq \Theta(C^{(l)}, S^{(l+1)}) \geq
  \Theta(C^{(l+1)}, S^{(l+1)})\;\;,
$$ 
hence the sequence $\Theta(C^{(l)},S^{(l)})$ is decreasing.The termination criterion in $(2.)$ is thus well-defined, and the above computation implies that the sequence is
  strictly decreasing until termination. By the final
  update of the sites as centers of gravity in $(2.)$, we return a
  feasible centroidal power diagram.

  In each iteration we compute a clustering corresponding to a vertex of the weight-balanced partition polytope. By Corollary \ref{cor:WeComputeSPD}, it allows a
  strongly feasible power diagram. The objective function values are strictly 
  decreasing, so no vertex is visited twice. Also, if a vertex stays optimal in the next step, we have reached a centroidal power diagram and the algorithm terminates. As the number of vertices is finite, this proves
  termination. \hfill $\Box$
\end{pf}

In the next section, we provide a worst-case upper bound on the number
of iterations of our algorithm.

\subsection{A bound on the number of iterations for strongly
  weight-balanced clustering}\label{subsec-iterations}

The first step towards a bound on the number of iterations of
Algorithm \ref{algo:wbkm} is to bound the number of different supports
of strongly feasible power diagrams for weight-balanced clusterings. 
This will be done by estimating the number of different point-cell incidence 
structures that can possibly be realized by power diagrams. 
To be more precise, for a power diagram $P:=(P_1,\ldots,P_k)$ let
$$
\mathcal{X}(P):=(X\cap P_1,\ldots, X\cap P_k)
$$ 
be the {\bf $(X,P)$-incidence pattern}. In general, i.e., if we do not want to particularly stress
a specific power diagram, we will speak of a {\bf power pattern}.  
Note that for a weight-balanced least-squares assignment $C$ and a corresponding
strongly feasible power diagram $P:=(P_1,\ldots,P_k)$ we have
$$
\text{supp(C)}=(\text{supp}(C_1),\dots,\text{supp}(C_k))=
(X\cap P_1,\ldots, X\cap P_k)=\mathcal{X}(P).
$$
Of course, since our definition of an $(X,P)$-incidence pattern
does not involve weight-balancing, the number of power patterns 
will in general be larger than the number of strongly feasible power diagrams 
for weight-balanced clusterings.

In order to provide an upper bound for the number of different power patterns, 
we use a well-known bound on the number of so-called sign-patterns of a set of
polynomials by Warren \cite{w-68}. (In \cite{w-68} only a bound
for $\{-1,1\}$-sign paterns is given; this can, however, easily be extended to
the $\{-1,0,1\}$-sign patterns that we need here; see e.g. \cite{alon-95}.)

Let $p_1,\dots,p_t$ be a system of real polynomials in $s$
variables. For a point $z\in \mathbb{R}^s$, the {\bf sign-pattern } of
$p_1,\dots,p_t$ is a tuple $v(z)=(v_1,\dots,v_t)^T\in \{+1,0,-1\}^{t}$
defined by $v_i=-1$ if $p_i(z)<0$, $v_i=0$ if $p_i(z)=0$ and $v_i=1$
if $p_i(z)>0$.

\begin{proposition}[ \cite{w-68}]\label{prop:poly} 
Let $p_1,\dots,p_t$ be a system of real polynomials in $s$  variables, 
all of degree at most $l$. If $s\leq 2t$, then the number of different 
sign-patterns of this system is bounded above by 
$$
\left(\frac{8e\cdot l\cdot t}{s}\right)^s\;\; .
$$
\end{proposition}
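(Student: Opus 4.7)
The plan is to deduce the $\{-1,0,+1\}$-sign pattern bound from Warren's classical $\{-1,+1\}$-sign pattern bound by a small perturbation trick. For a parameter $\varepsilon>0$, consider the enlarged system of $2t$ polynomials
\[
P_1^-,\,P_1^+,\,P_2^-,\,P_2^+,\,\ldots,\,P_t^-,\,P_t^+,\qquad \text{where } P_i^{\pm}:=p_i\pm\varepsilon,
\]
all of degree at most $l$. Choose $\varepsilon$ strictly smaller than the minimum of $\lvert p_i(z)\rvert$ taken over all pairs $(i,z)$ with $p_i(z)\neq 0$ and $z$ a representative of one of the (at most $3^t$, hence finitely many) realized $\{-1,0,+1\}$-sign patterns. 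This minimum is over a finite positive set, so such an $\varepsilon$ exists.

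At every chosen representative $z$, the $\{-1,0,+1\}$-sign pattern of $(p_1,\ldots,p_t)$ is now encoded injectively by the strict $\{-1,+1\}$-sign pattern of the enlarged system: $p_i(z)>0$ yields $(+,+)$, $p_i(z)<0$ yields $(-,-)$, and $p_i(z)=0$ yields $(-,+)$. Distinct $\{-1,0,+1\}$-sign patterns of the original $t$ polynomials therefore produce distinct $\{-1,+1\}$-sign patterns of the enlarged $2t$ polynomials. I can then invoke Warren's theorem \cite{w-68} in the dichotomous case, which bounds the number of $\{-1,+1\}$-sign patterns of $T$ polynomials of degree at most $l$ in $s$ variables (for $s\leq T$) by $(4elT/s)^s$. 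Setting $T:=2t$, the hypothesis $s\leq 2t$ is precisely $s\leq T$, so the bound becomes
\[
\left(\frac{4el\cdot 2t}{s}\right)^s=\left(\frac{8elt}{s}\right)^s,
\]
as required.

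The reduction itself is routine combinatorial bookkeeping, and the factor $8$ (rather than the $4$ of the classical dichotomous bound) is explained entirely by the doubling $t\mapsto 2t$. The substantive content, which I would import as a black box, is Warren's original inequality, and this is where I expect the real difficulty to lie in a self-contained write-up: it rests on the Oleinik-Petrovsky-Thom-Milnor bound on the sum of Betti numbers of a real algebraic variety, applied after a transversality perturbation that puts the $V(p_i)$ in general position and a one-point compactification of $\mathbb{R}^s$; under this setup, the number of connected components of $\mathbb{R}^s\setminus\bigcup_i V(p_i)$ is reduced to counting critical points of a generic linear functional on each stratum. Since \cite{w-68} (and \cite{alon-95} for the $\{-1,0,+1\}$ extension) are available, I would not reconstitute that machinery here and would merely check that the constant transfers cleanly under the doubling.
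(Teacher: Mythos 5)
Your proof is correct and is exactly the route the paper intends: the proposition is stated there without proof, the authors remarking only that Warren's $\{-1,+1\}$-bound from \cite{w-68} ``can easily be extended'' to the $\{-1,0,+1\}$-patterns needed here (citing \cite{alon-95}), and your $\varepsilon$-perturbation with the doubled system $p_i\pm\varepsilon$ is precisely that standard extension. The bookkeeping all checks out --- the choice of $\varepsilon$ over finitely many pattern representatives, the injective encoding of ternary patterns by strict sign vectors of the $2t$ polynomials, and the passage from the constant $4$ to $8$ under $t\mapsto 2t$ with the hypothesis $s\leq 2t$ matching $s\leq T$.
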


We will now use Proposition \ref{prop:poly} to give an upper bound on the 
number of different  power patterns.

 \begin{theorem}\label{thm:numberofsolutions}
Let $X:=\{x_1,\dots,x_n\}\subset\mathbb{R}^d$ be a (weighted) data set that has to be partitioned with respect to a (strongly) feasible power diagram with $k$ cells. Then there are at most 
$$
\left(\frac{8e\cdot (k-1)n}{d}\right)^{(d+1)k-1}
$$ 
different power patterns.
 \end{theorem}

\begin{pf}
  An $(S,\Sigma)$-power diagram with $k$ cells in $\mathbb{R}^d$ is
  defined by $k$ distinct sites $S:=\{s_1,\dots,s_k\}\subset
  \mathbb{R}^d$ and $k$ weights
  $\Sigma:=\{\sigma_1,\dots,\sigma_k\}\subset \mathbb{R}$. Its cells
  are the same as those of an $(S,\Sigma+\sigma')$-power diagram,
  where $\Sigma+\sigma'=\{\sigma_1+\sigma',\dots,\sigma_k+\sigma'\}$
  for some $\sigma'\in \mathbb{R}$. Due to this invariance we may choose
  $\sigma_k=0$, and it suffices to use a
  vector $$g=(s_1,\sigma_1,\dots,s_{k-1},\sigma_{k-1},s_k)^T \in
  \mathbb{R}^{(d+1)k-1}$$ to define the cells of the
  $(S,\Sigma)$-power diagram. We call these vectors {\em reduced
    control vectors}. 

For any pair $i\neq j$ and any point $x\in X$, we define a polynomial $$p_{i,j,x}(g):=(\|x-s_i\|^2-\sigma_i) - (\|x-s_j\|^2-\sigma_j).$$ The sign of $p_{i,j,x}(g)$ represents how $\|x-s_i\|^2-\sigma_i$ relates to $\|x-s_j\|^2-\sigma_j$ for the power diagram defined by $g$. Recall that $x$ lies in the $i$-th cell if and only if $\|x-s_i\|^2-\sigma_i \leq \|x-s_j\|^2-\sigma_j$ for all $j\neq i$. Note that there are $\binom{k}{2}\cdot n$ such polynomials and that they are quadratic in $s_i$ and $s_j$ (and linear in $\sigma_i, \sigma_j$).

Let $\mathcal{V}$ be the set of all sign patterns for the set of polynomials $p_{i,j,x}$ for all $i\neq j$ and $x\in X$. Each possible power pattern for $X$ corresponds to one or more sign patterns in $\mathcal{V}$. Hence, by bounding the number of polynomials in $\mathcal{V}$ from above, we also derive an upper bound on the number of power patterns for $X$.

We now apply Proposition \ref{prop:poly}. We have $\binom{k}{2}\cdot n$
polynomials, all of them are of degree $2$, and there are $(d+1)k-1$ variables. In our case, the condition $s \leq 2t$
translates to $(d+1)k-1\leq 2 \binom{k}{2}n$. It is satisfied since
from our general assumptions $d+1\leq n$ and $k\geq 2$ we obtain
$$(d+1)k-1 < (d+1)k \leq nk \leq (k-1)kn\;\;.$$

Hence, we obtain the upper bound 
$$
\left(\frac{8e\cdot 2  \binom{k}{2}n}{(d+1)k-1}\right)^{(d+1)k-1}
\leq \left(\frac{8e\cdot k(k-1)n}{(d+1)k-k}\right)^{(d+1)k-1}
= \left(\frac{8e\cdot (k-1)n}{d}\right)^{(d+1)k-1}\;\;
$$ 
which proves the assertion.  \hfill $\Box$
\end{pf}

Now we can prove our bound on the number of iterations for Algorithm \ref{algo:wbkm}.

\begin{theorem}\label{thm:numberofiterations}
The number of iterations of Algorithm \ref{algo:wbkm} is bounded by
$$
(40ek^2n)^{(d+1)k-1}.
$$
\end{theorem}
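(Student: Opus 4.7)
The plan is to bound the iteration count by $|\mathcal{V}|$, then bound $|\mathcal{V}|$ by combining Theorem \ref{thm:numberofsolutions} with a per-pattern vertex count. By Theorem \ref{thm:termination}, every iterate corresponds to a distinct vertex of $\mathcal{V}$: the objective $\Theta$ is strictly decreasing until termination, so no vertex is visited twice. Hence the number of iterations is at most $|\mathcal{V}|$.

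Each vertex in $\mathcal{V}$ produces, via Corollary \ref{cor:WeComputeSPD}, a clustering $C$ that admits a strongly feasible power diagram $P$ supporting $C$, so $\text{supp}(C)=\mathcal{X}(P)$. Therefore the support of each vertex in $\mathcal{V}$ is a power pattern, of which there are at most $\left(\frac{8e(k-1)n}{d}\right)^{(d+1)k-1}$ by Theorem \ref{thm:numberofsolutions}. To finish the count I would bound the number of vertices in $\mathcal{V}$ sharing a fixed power pattern. Fixing the support pins down which $y_{ij}$ vanish; by Proposition \ref{prop:LSAPD}, at most $2(k-1)$ of the remaining positive entries are fractional, and the rest equal $1$. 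The vertex is then determined by choosing a subset of at most $2(k-1)$ of the $2k$ cluster-size bounds to be tight, giving at most $\binom{2k}{2(k-1)}\leq 4^k$ vertices per pattern.

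Combining both bounds yields $|\mathcal{V}|\leq 4^k\cdot\left(\frac{8e(k-1)n}{d}\right)^{(d+1)k-1}$, and the remaining step is the arithmetic verification that this is at most $(40ek^2n)^{(d+1)k-1}$. Since $k\geq 2$ and $d\geq 1$, one has $\frac{40ek^2n}{8e(k-1)n/d}=\frac{5k^2d}{k-1}\geq 10$ and $(d+1)k-1\geq k$, so $\left(\frac{5k^2d}{k-1}\right)^{(d+1)k-1}\geq 10^k\geq 4^k$, absorbing the extra factor. The main obstacle is the per-pattern vertex count: one must carefully argue that specifying the support together with a choice of tight size bounds determines a basic feasible solution via the induced linear system, and that degenerate vertices (where several bases encode the same vertex) cause only overcounting, which is harmless for an upper bound.
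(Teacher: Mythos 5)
Your proposal is correct and follows the same overall decomposition as the paper: iterations are bounded by the number of distinct vertices visited (via strict decrease of the objective), which is in turn bounded by the number of power patterns (Theorem \ref{thm:numberofsolutions}) times the number of vertices sharing a fixed pattern. The one place where you genuinely diverge is the per-pattern vertex count, which you correctly identified as the delicate step. The paper handles it by constructing the restricted weight-balanced partition polytope $Q'(y^*)$ in the at most $2(k-1)$ fractional variables, observing it has at most $5k-2$ constraints, and bounding its vertices by the number of bases, $\binom{5k-2}{2(k-1)}\leq (5k)^{2k-2}$. You instead argue directly that, once the support is fixed, the active nonnegativity constraints and the assignment equalities are determined, so distinct vertices must differ in which of the $2k$ cluster-size bounds are active; since the active constraint set at a vertex has full rank and hence determines the vertex uniquely, this gives at most $2^{2k}=4^k$ vertices per pattern. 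This is both more elementary and tighter than the paper's $(5k)^{2k-2}$, and your concern about degeneracy is indeed harmless: degenerate vertices only make the count of realizable active sets smaller. Two small points of hygiene: your intermediate expression $\binom{2k}{2(k-1)}$ counts only subsets of one fixed size, whereas the argument needs all subsets (including possibly all $2k$ bounds active when $\kappa_i^-=\kappa_i^+$), but your final bound $4^k=2^{2k}$ covers every case; and your closing arithmetic $\frac{5k^2d}{k-1}\geq 10$ with $(d+1)k-1\geq k$ correctly absorbs the $4^k$ factor into $(40ek^2n)^{(d+1)k-1}$, paralleling the paper's absorption of $(5k)^{2k-2}$.
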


\begin{pf}
  By Corollary \ref{cor:WeComputeSPD}, in each iteration, the linear
  program of the algorithm computes a clustering that allows a
  strongly feasible power diagram.

  The sequence of objective function values of these linear programs
  is strictly decreasing. Thus, the number of iterations is bounded
  above by the number $|\mathcal{V}|$ of vertices of the weight-balanced partition
polytope corresponding to   clusterings that allow a strongly feasible power diagram.

  Given a clustering $C$ that allows a strongly feasible power
  diagram, we will bound the number of vertices of the weight-balanced
  partition polytope whose clusterings share the same power pattern. 
Combining this number with the bound in Theorem
  \ref{thm:numberofsolutions} then yields the claim.

  Let $y^*$ be the assignment vector of such a clustering $C$. By
  Proposition \ref{prop:LSAPD}, $y^*$ contains at most $2(k-1)$
  entries $y^*_{ij}$ with $0<y^*_{ij}<1$. This implies that at most $k-1$ points
  belong to more than one cluster. 

Let $X_{\mbox{\small{int}}}(y^*)$ denote the subset of $X$ of all points $x_j$ that belong to 
exactly one cluster. For these points, $y^*_{ij}=1$ for exactly one $i\leq k$, and $y^*_{ij}=0$ for all others.  In the following linear system of inequalities all the points of $X_{\mbox{\small{int}}}(y^*)$ are fixed by the third line.  

$$
\begin{array}{lcrclcl}
  \kappa_i^- & \leq   & \sum\limits_{j=1}^{n}  \omega_j\cdot y_{ij}    & \leq & \kappa_i^+ & \quad &(i\leq k) \hfill \;\\
  &      & \sum\limits_{i=1}^k y_{ij} & =  &  1
  & \quad &  (j\leq n)\hfill \;\\
  &      &          y_{ij}
  & =   & y^*_{ij}                   &  &  (i\leq k,j\leq n: y^*_{ij} \in \{0,1\})\hfill  \;\\
  &      &          y_{ij}
  & \geq   & 0                   &  & (i\leq k,j\leq n: y^*_{ij}\notin \{0,1\}) \hfill \;\;.
\end{array}
$$

This system encodes the {\bf restricted weight-balanced partition polytope $Q(y^*)$}
as the intersection of the weight-balanced partition polytope with the set of 
hyperplanes $y_{ij} = y^*_{ij}$ for all $y^*_{ij} \in \{0,1\}$. 
It represents all feasible clusterings $C':=(C_1',\ldots,C_k')$ in which all points of 
$X_{\mbox{\small{int}}}(y^*)$ are integrally assigned as specified. In particular, we have
$$
x_j\in X_{\mbox{\small{int}}}(y^*) \, \land \, \exists i,l: x_j\in \text{supp}(C'_i)\cap\text{supp}(C'_l)
\quad \Rightarrow \quad  i=l.
$$
Note that $y^*$, being a vertex of the weight-balanced partition polytope, is also a vertex 
of the polytope $Q(y^*)$.

Using the parameter vector $z:=(z_{11},\dots,z_{kn})^T\in
\mathbb{R}^{kn}$ defined by $z_{ij}:=1$ if $y_{ij}^*=1$, and $z_{ij}:=0$
otherwise, $Q(y^*)$ is equivalent to the polytope $Q'(y^*)$
$$
\begin{array}{lcrclcl}
  \kappa_i^- - \sum\limits_{j=1}^{n}  \omega_j\cdot z_{ij}& \leq   &  \sum\limits_{j:\, y^*_{ij}\notin \{0,1\}} \omega_jy_{ij}  & \leq & \kappa_i^+ - \sum\limits_{j=1}^{n}  \omega_j\cdot z_{ij} & \quad &(i\leq k) \hfill \;\\
  &      &   \sum\limits_{i:\, y^*_{ij}\notin \{0,1\}} y_{ij} & =  &  1 - \sum\limits_{i=1}^k z_{ij}
  & \quad & ( j \leq n: \exists y^*_{ij}\notin \{0,1\})\hfill \;\\
  &      &          y_{ij}
  & \geq   & 0                   &  & (i\leq k,j\leq n: y^*_{ij}\notin \{0,1\}) \hfill \;\;.
\end{array}
$$
As there are at most $2(k-1)$ values $0<y^*_{ij}<1$, there are at most
$2(k-1)$ variables, and thus $Q'(y^*)$ has dimension at most
$\mathbb{R}^{2(k-1)}$. Further, $Q'(y^*)$ is presented by at most $5k-2$
constraints: line $1$ of the above formulation contributes at most $2k$ constraints, line $2$
at most $k-1$, and line $3$ at most $2(k-1)$.

The number of vertices of the weight-balanced partition polytope that
correspond to such clusterings $C'$ and that allow a strongly feasible 
power diagram is bounded above by the number of vertices of $Q'(y^*)$. 
This number does not exceed the
number of bases for the system of inequalities defining $Q'(y^*)$. The
latter is at most $\binom{5k-2}{2(k-1)}\leq (5k)^{2k-2}$.

By Theorem \ref{thm:numberofsolutions}, the number of power patterns
is at most $\frac{8e\cdot   (k-1)n}{d}^{(d+1)k-1}$. 
As pointed out before, this is also an upper bound for the number of supports of strongly feasible 
power diagrams. For each of these supports, the restricted weight-balanced partition polytope has 
at most $(5k)^{2k-2}$ vertices. 
Since, during a run of Algorithm \ref{algo:wbkm}, no vertex is visited twice, we obtain the
upper bound 
$$
\begin{aligned}
(5k)^{2k-2}\cdot \left(\frac{8e\cdot (k-1)n}{d}\right)^{(d+1)k-1} 
& \leq \left(\frac{5k\cdot 8e\cdot (k-1)n}{d}\right)^{(d+1)k-1}
&\le (40ek^2n)^{(d+1)k-1}
\end{aligned}
$$
on the number of its iterations. \hfill $\Box$
\end{pf}

As the proof of Theorem \ref{thm:numberofiterations} shows, the number of
different strongly feasible power diagrams is bounded by the same number.
Hence, with the aid of Proposition \ref{prop:LSAPD} we obtain the following 
corollary.

\begin{corollary}\label{cor:number}
For given $X$, $\Omega$ and $\kappa^{\pm}$, the number of strict weight-balanced 
least-squares assigments is bounded by
$$
(40ek^2n)^{(d+1)k-1}.
$$
\end{corollary}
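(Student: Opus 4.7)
The plan is to recycle the counting argument already developed for Theorem \ref{thm:numberofiterations}, but reinterpret it as counting strict weight-balanced least-squares assignments rather than iterations.

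First, I would invoke Proposition \ref{prop:LSAPD}: any strict weight-balanced least-squares assignment $C$ allows a strongly feasible power diagram $P$, its support equals the incidence pattern $\mathcal{X}(P)$, and its assignment vector $y^*$ has at most $2(k-1)$ fractional entries. Moreover, since strictness means the linear program from Section \ref{subsec-lp-formulation} has a unique optimum for the associated sites (equivalently, for the sites of $P$), such a $y^*$ is a vertex of the weight-balanced partition polytope. So strict weight-balanced least-squares assignments inject into the pairs (power pattern, vertex of the partition polytope with that support).

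Second, I would count the number of admissible supports. By Theorem \ref{thm:numberofsolutions}, there are at most $\left(8e(k-1)n/d\right)^{(d+1)k-1}$ power patterns, and this is an upper bound for the number of supports of strongly feasible power diagrams, hence for the number of supports of strict weight-balanced least-squares assignments.

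Third, I would fix such a support and bound the number of compatible vertices of the weight-balanced partition polytope. This is exactly the combinatorial content of the proof of Theorem \ref{thm:numberofiterations}: after restricting to the hyperplanes $y_{ij}=y^*_{ij}$ for the integral entries, the restricted polytope $Q'(y^*)$ has dimension at most $2(k-1)$ and is defined by at most $5k-2$ inequalities, so its vertex count is bounded by $\binom{5k-2}{2(k-1)}\leq (5k)^{2k-2}$.

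Finally, multiplying the two bounds yields
$$
(5k)^{2k-2}\cdot \left(\frac{8e(k-1)n}{d}\right)^{(d+1)k-1} \leq (40ek^2n)^{(d+1)k-1},
$$
as in the last display of the proof of Theorem \ref{thm:numberofiterations}. The only non-routine point is confirming that strictness forces a strict weight-balanced least-squares assignment to be a vertex (not merely an interior point of an optimal face) of the partition polytope; this follows from the uniqueness built into the definition of strict in Section \ref{subsec-lc-assignments}. Everything else is a direct citation of results already established.
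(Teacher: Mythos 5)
Your proposal is correct and follows essentially the same route as the paper: the paper derives the corollary by observing that the counting in the proof of Theorem \ref{thm:numberofiterations} bounds the number of vertices of the weight-balanced partition polytope whose clusterings allow a strongly feasible power diagram, and then applies Proposition \ref{prop:LSAPD} to place every strict weight-balanced least-squares assignment (a unique LP optimum, hence a vertex) among these. You have merely spelled out the injection and the two-factor product $(5k)^{2k-2}\cdot\left(\frac{8e(k-1)n}{d}\right)^{(d+1)k-1}$ that the paper leaves implicit.
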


\vspace*{-0.4cm}

\section{Final Remarks}\label{sec-final}

Note that in each iteration of Algorithm \ref{algo:wbkm} the feasibility region of 
the linear program in Step $(1.)$ is the same. An optimal assignment $y^*$ stays therefore 
feasible for the subsequent iteration and can hence be used
for a warm start. If the centers of gravity did not change significantly, the new optimal
assignment $y^{**}$ will be only a few primal simplex pivot steps away from $y^*$. 

Further, we would like to point out that it is particularly simple to kernelize Algorithm \ref{algo:wbkm}, which is a necessity in many applications.  For some background about kernel functions, see e.g. \cite{ss-02}. The key idea is the use of a kernel function $\Phi:\mathbb{R}^d\times \mathbb{R}^d \rightarrow \mathbb{R}$ in place of the standard inner product for points in the original data space $\mathbb{R}^d$. The only inner products that appear in the algorithm are in the objective function of the underlying linear program, and are of the type $x_j^Ts_i$ or
$s_i^Ts_i$. By preprocessing the $k\cdot n + k\cdot k=k(n+k)$ values
$\Phi(x_j,s_i)$ and $\Phi(s_i,s_i)$, we obtain a kernelized version of the algorithm.

\vspace*{-0.2cm}
\bibliographystyle{plain}

 \bibliography{sample}

\end{document}